\theoremstyle{definition}
\newtheorem{theorem}{Theorem}[section]
\newtheorem{lemma}[theorem]{Lemma}
\newtheorem{corollary}[theorem]{Corollary}
\newcommand{\Z}{\mathbb{Z}}
\newcommand{\N}{\mathbb{N}}
\newcommand{\bi}{{\bf i}}
\newcommand{\bj}{{\bf j}}
\newcommand{\bk}{{\bf k}}
\title{Sums of Quaternion Squares and a Theorem of Watson}
\author{Tim Banks, Spencer Hamblen, Tim Sherwin, and Sal Wright}
\date{}
\begin{document}

\maketitle

\begin{abstract}
    We use a representability theorem of G. L. Watson to examine sums of squares in Quaternion rings with integer coefficients.  This allows us to determine a large family of such rings where every element expressible as the sum of squares can be written as the sum of 3 squares.
\end{abstract}

\section{Introduction}

\begin{theorem}[Waring's Problem/Hilbert-Waring Theorem]
For every integer $k \geq 2$ there exists a positive integer $g(k)$ such that every positive integer is the sum of at most $g(k)$ $k$-th powers of integers.
\end{theorem}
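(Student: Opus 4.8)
The plan is to follow Hilbert's 1909 strategy, which rests on one algebraic identity together with Lagrange's four-square theorem. Fix $k \geq 2$; the case $k = 2$ is Lagrange's theorem, giving $g(2) = 4$, so assume $k \geq 3$. First I would reduce to large integers: it suffices to exhibit, for this $k$, integers $s = s(k)$ and $n_0 = n_0(k)$ such that every integer $n \geq n_0$ is a sum of at most $s$ nonnegative $k$-th powers, since each of the finitely many positive integers below $n_0$ is a sum of at most $n_0 - 1$ copies of $1^k$; one may then take $g(k) = \max\{s,\, n_0 - 1\}$. Thus the entire problem concerns large $n$, and representing integers by nonnegative $k$-th powers is a fortiori enough for the stated theorem.

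The crux is \emph{Hilbert's identity}: for every pair of positive integers $k, r$ there exist $M \in \mathbb{N}$, positive rationals $\gamma_1, \dots, \gamma_M$, and integer vectors $\mathbf{a}_1, \dots, \mathbf{a}_M \in \mathbb{Z}^r$ with
\[
  (x_1^2 + x_2^2 + \cdots + x_r^2)^k \;=\; \sum_{j=1}^{M} \gamma_j \,(\mathbf{a}_j \cdot \mathbf{x})^{2k}
  \qquad\text{identically in } x_1, \dots, x_r .
\]
I would establish this by a convexity/compactness argument inside the finite-dimensional vector space $V$ of homogeneous degree-$2k$ forms in $r$ variables. The powers $(\mathbf{a}\cdot\mathbf{x})^{2k}$ of linear forms span $V$ by polarization, so the convex cone they generate is full-dimensional; moreover, averaging one such power over the orthogonal group $O(r)$ against Haar measure yields a strictly positive scalar multiple of the $O(r)$-invariant form $(x_1^2 + \cdots + x_r^2)^k$. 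Hence that form lies in the \emph{interior} of the cone generated by $\{(\mathbf{a}\cdot\mathbf{x})^{2k} : \mathbf{a} \in \mathbb{R}^r\}$. Since $\mathbb{Q}^r$ is dense in $\mathbb{R}^r$ and the map $\mathbf{a}\mapsto(\mathbf{a}\cdot\mathbf{x})^{2k}$ is continuous, an interior point of that cone already lies in the cone generated by rational $\mathbf{a}$'s; Carathéodory's theorem then expresses it as a \emph{finite} nonnegative combination of such powers, which may be taken with rational coefficients (the feasibility region is a nonempty rational polyhedron). Finally, homogeneity, $(\lambda\mathbf{a}\cdot\mathbf{x})^{2k} = \lambda^{2k}(\mathbf{a}\cdot\mathbf{x})^{2k}$, lets me clear denominators in each $\mathbf{a}_j$ while keeping the $\gamma_j$ positive rational.

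With the identity in hand I would deduce Waring's theorem using $r = 4$. Each term $(\mathbf{a}_j\cdot\mathbf{x})^{2k} = \big((\mathbf{a}_j\cdot\mathbf{x})^2\big)^k$ is a genuine $k$-th power of an integer, so after clearing the common denominator $N$ of the $\gamma_j$ (repeating terms to make the coefficients positive integers) we learn: for \emph{every} $m \geq 0$, writing $m = x_1^2 + x_2^2 + x_3^2 + x_4^2$ by Lagrange's theorem, the integer $N m^k$ is a sum of at most $C$ nonnegative $k$-th powers, where $C$ depends only on $k$. From here an elementary but intricate counting argument completes the proof: one shows that a bounded number of $k$-th powers represents every integer in a suitably long interval $[A,\, A + A^{\theta})$ for a fixed $\theta = \theta(k) > 0$ — by combining the displayed fact with greedy extraction of blocks of the form $N m^k$ and estimates on the size of the resulting remainders — and then one chains consecutive such intervals to cover all $n \geq n_0$. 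The genuinely hard step is the proof of Hilbert's identity; the reductions framing it and the combinatorial patching that follows it are elementary, if fiddly.
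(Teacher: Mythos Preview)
The paper does not prove this theorem at all: it is quoted in the introduction purely as historical background and motivation, with no argument supplied. So there is no ``paper's proof'' to compare your proposal against.

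As for your sketch on its own merits: you have correctly identified Hilbert's 1909 route, and your convexity/Carath\'eodory derivation of Hilbert's identity is a clean modern rendering of that step. The gap is in the last paragraph. From the identity with $r=4$ and Lagrange you obtain only that every integer of the special shape $N m^{k}$ is a sum of $C$ nonnegative $k$-th powers. Passing from this to ``every large $n$ is a sum of boundedly many $k$-th powers'' is not a matter of greedy subtraction and interval-chaining as you describe: consecutive values $N m^{k}$ and $N(m+1)^{k}$ differ by roughly $Nk\,m^{k-1}$, which is of order $A^{1-1/k}$ near $A$, and you have given no mechanism that fills gaps that large with a bounded number of $k$-th powers. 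In Hilbert's actual argument this is the genuinely delicate part: one needs further polynomial identities (or an inductive scheme in $k$, as in the simplifications by Stridsberg, Hausdorff, and later Dress) that produce representable integers in long arithmetic progressions, not merely at the isolated points $N m^{k}$. Your assertion that ``the genuinely hard step is the proof of Hilbert's identity'' inverts the difficulty---the identity is nowadays the easy half, and the combinatorial bridge from it to full universality is where the real content lies. As written, that bridge is asserted rather than built.
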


The idea behind Waring's Problem -- examining sums of powers -- can be easily extended to any ring.   For an excellent and thorough exposition of the research on Waring's Problem and its generalizations, see Vaughan and Wooley \cite{wooley}.  We will specifically be examining sums of squares in quaternion rings.

Let $LQ_{a,b}$ denote the quaternion ring
\[ \{\alpha_0 + \alpha_1 {\bf i} + \alpha_2 {\bf j} + \alpha_3 {\bf k} \mid \alpha_n,a,b \in {\mathbb Z}, {\bf i}^2 = -a, {\bf j}^2 = -b, {\bf i}{\bf j}=-{\bf j}{\bf i}={\bf k}\},\]
and let $LQ_{a,b}^n$ denote the additive group generated by all $n$th powers in $LQ_{a,b}$.  We can then examine the {\em Waring numbers} $g_{a,b} (k)$, the least positive integer such that every element of $LQ_{a,b}^n$ can be written as the sum of at most $g_{a,b} (k)$ $k$-th powers of elements of $LQ_{a,b}$.

Note here that ${\bf k}^2 = -ab$, and that if $a = b = 1$, we have the {\em Lipschitz quaternions}, where Pollack \cite{pollack} recently proved the analogue of the Hilbert-Waring Theorem -- that $g_{1,1}(k)$ exists for all $k \geq 2$.

The paper will examine sums of squares, and determine $g_{a,b} (2)$ for various $a,b > 0$.  Previous work of Cooke, Whitfield, and the second author \cite{chw} showed that $3 \leq g_{a,b}(2) \leq 5$ for all $a,b > 0$. We use a representability theorem of Watson (Theorem \ref{watsonthm}) to extend these results to the following theorem.

\begin{theorem} \label{main}
Given positive square-free integers $a$ and $b$, if $\gcd(a,b) \leq 2$, then $g_{a,b} (2) = 3$.
\end{theorem}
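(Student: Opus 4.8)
The plan is to first determine the group $LQ_{a,b}^{2}$ explicitly, then reduce the ``three squares'' claim to a single Diophantine problem about an indefinite quaternary form, and finally dispatch that problem with Watson's Theorem~\ref{watsonthm}. I begin with $LQ_{a,b}^{2}$. From $\mathbf{i}^{2}=-a$, $\mathbf{j}^{2}=-b$, $\mathbf{k}^{2}=-ab$ together with the anticommutation relations $\mathbf{i}\mathbf{j}+\mathbf{j}\mathbf{i}=\mathbf{i}\mathbf{k}+\mathbf{k}\mathbf{i}=\mathbf{j}\mathbf{k}+\mathbf{k}\mathbf{j}=0$, one computes for $\beta=x+y\mathbf{i}+z\mathbf{j}+w\mathbf{k}$ that
\[
\beta^{2}=(x^{2}-ay^{2}-bz^{2}-abw^{2})+2xy\,\mathbf{i}+2xz\,\mathbf{j}+2xw\,\mathbf{k}.
\]
Hence every square, and so every element of $LQ_{a,b}^{2}$, has even $\mathbf{i},\mathbf{j},\mathbf{k}$-components; conversely $1$, $2\mathbf{i}$, $2\mathbf{j}$, $2\mathbf{k}$ all lie in $LQ_{a,b}^{2}$ (e.g.\ $2\mathbf{i}=(1+\mathbf{i})^{2}-(1-a)\cdot 1^{2}$), so $LQ_{a,b}^{2}=\{\alpha_{0}+2\alpha_{1}\mathbf{i}+2\alpha_{2}\mathbf{j}+2\alpha_{3}\mathbf{k}:\alpha_{n}\in\mathbb{Z}\}$. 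Since \cite{chw} already gives $g_{a,b}(2)\ge 3$, it remains to show every such element is a sum of three squares.

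Next I would reduce to one form. Given $X=\alpha_{0}+2\alpha_{1}\mathbf{i}+2\alpha_{2}\mathbf{j}+2\alpha_{3}\mathbf{k}$, take (for instance) $\beta_{1}=1+\alpha_{1}\mathbf{i}+\alpha_{2}\mathbf{j}+\alpha_{3}\mathbf{k}$, so $\beta_{1}^{2}$ reproduces the $\mathbf{i},\mathbf{j},\mathbf{k}$-part of $X$ and contributes $1-a\alpha_{1}^{2}-b\alpha_{2}^{2}-ab\alpha_{3}^{2}$ to the scalar part. Taking $\beta_{2}=x\in\mathbb{Z}$ and $\beta_{3}=y\mathbf{i}+z\mathbf{j}+w\mathbf{k}$ purely imaginary, $\beta_{2}^{2}+\beta_{3}^{2}=x^{2}-ay^{2}-bz^{2}-abw^{2}$ is a scalar; thus $X=\beta_{1}^{2}+\beta_{2}^{2}+\beta_{3}^{2}$ as soon as the quaternary form $f=x^{2}-ay^{2}-bz^{2}-abw^{2}$ (i.e.\ the diagonal form $\langle 1,-a,-b,-ab\rangle$) represents $N:=\alpha_{0}-1+a\alpha_{1}^{2}+b\alpha_{2}^{2}+ab\alpha_{3}^{2}$. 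As $\alpha_{0}$ ranges over $\mathbb{Z}$, so does $N$, so it suffices to prove that $f$ represents every integer.

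Finally I would invoke Watson. Because $a,b>0$, the form $f$ is nondegenerate and indefinite of signature $(1,3)$, so Theorem~\ref{watsonthm} reduces the global universality of $f$ to $\mathbb{Z}_{p}$-universality at every prime $p$ (the real place being automatic). For $p\nmid 2ab$, $f$ is unimodular of rank $4$ over $\mathbb{Z}_{p}$, hence $\mathbb{Z}_{p}$-universal. For an odd prime $p\mid ab$, the hypothesis $\gcd(a,b)\le 2$ forces $p$ to divide exactly one of $a,b$; since $a,b$ are square-free, $f$ then splits over $\mathbb{Z}_{p}$ as a unimodular binary form $\perp\, p\cdot(\text{binary form})$, and a short direct check shows this represents all of $\mathbb{Z}_{p}$. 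The prime $p=2$ is a finite case computation using the square-freeness of $a,b$ (here $\gcd(a,b)=2$ is harmless, there being no quadratic non-residue to miss modulo $2$). Assembling the local results yields $\mathbb{Z}$-universality of $f$, and the theorem.

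The main obstacle is twofold. Conceptually, one must recognize that $\gcd(a,b)\le 2$ is \emph{exactly} the hypothesis that removes a genuine local obstruction: if an odd prime $p$ divided both $a$ and $b$, then $f\equiv x^{2}\pmod p$, so $f$ could not represent any integer that is a non-residue mod $p$, and with it this strategy (and, plausibly, the equality $g_{a,b}(2)=3$ itself) would collapse. Technically, the labor lies in verifying that $f$ meets the hypotheses of Watson's theorem and in carrying out the $2$-adic case analysis cleanly.
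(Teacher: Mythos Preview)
Your overall architecture matches the paper's exactly: compute $LQ_{a,b}^{2}$, use $\beta_{1}=1+\alpha_{1}\mathbf{i}+\alpha_{2}\mathbf{j}+\alpha_{3}\mathbf{k}$ to kill the pure part, and reduce to showing that $f=\langle 1,-a,-b,-ab\rangle$ is universal over $\mathbb{Z}$. The lower bound $g_{a,b}(2)\ge 3$ from \cite{chw} is also what the paper cites. So far so good.

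The gap is in how you invoke Watson. You write that Theorem~\ref{watsonthm} ``reduces the global universality of $f$ to $\mathbb{Z}_{p}$-universality at every prime $p$.'' That is \emph{not} what the theorem stated in the paper says. Theorem~\ref{watsonthm} asserts that $f$ represents $d$ provided $f$ \emph{properly} represents $d$ modulo the single integer $2^{4}\lvert c_{0}c_{1}c_{2}c_{3}\rvert = 2^{4}a^{2}b^{2}$; ``properly'' means by a vector with $\gcd(x_{0},y_{1},y_{2},y_{3})=1$. This is neither formally weaker nor stronger than $\mathbb{Z}_{p}$-universality, and the paper spends essentially all of Section~3 bridging precisely this distinction: first reducing to square-free $d$ (so that any $\mathbb{Z}_{p}$-representation of $d$ is automatically primitive), then producing explicit proper representations modulo $p^{2}$ for each odd $p\mid ab$ (Theorem~\ref{modp2}), then modulo $2^{\nu}$ (a full case analysis you dismiss as ``a finite case computation''), and finally proving a Chinese Remainder--type lemma (Lemma~\ref{gluing}) showing that proper representations modulo coprime moduli can be glued to a proper representation modulo the product. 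None of these three ingredients---the square-free reduction, the primitivity of the local solutions, and the gluing---appears in your proposal.

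Your remark that the hypothesis $\gcd(a,b)\le 2$ is exactly what prevents $f\equiv x^{2}\pmod{p}$ for an odd prime $p$ is correct and insightful, and your Jordan-splitting description over $\mathbb{Z}_{p}$ is the right way to organize the odd-prime case. But to turn this into a proof using the paper's Theorem~\ref{watsonthm} as stated, you still need to (i) reduce to square-free $d$, (ii) upgrade ``$f$ represents $d$ over $\mathbb{Z}_{p}$'' to ``$f$ properly represents $d$ modulo $p^{2}$ (resp.\ $2^{\nu}$)'', and (iii) assemble these into a single proper representation modulo $2^{4}a^{2}b^{2}$.
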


As a Corollary, using a result of Moree \cite{moree}, we get the following.

\begin{corollary} \label{densitythm}
Let $k$ and $g$ be positive integers, and let 
\[C(g,k) = \lim_{x \to \infty} \frac{|\{(a,b) \in \N \times \N \mid a,b \leq x, g_{a,b}(k) = g\}|}{x^2}\]
If $C(3,2)$ exists, then $C(3,2) \geq 0.322590$.
\end{corollary}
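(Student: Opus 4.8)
The plan is to observe that Theorem~\ref{main} already exhibits an explicit subset of the pairs counted by $C(3,2)$, and to compute the natural density of that subset. Set
\[
S = \{(a,b) \in \N \times \N : a \text{ and } b \text{ are square-free},\ \gcd(a,b) \le 2\}.
\]
By Theorem~\ref{main}, $S \subseteq \{(a,b) \in \N\times\N : g_{a,b}(2) = 3\}$, so for every $x$ we have
\[
\#\{(a,b) \in S : a,b \le x\} \;\le\; \#\{(a,b) : a,b \le x,\ g_{a,b}(2) = 3\}.
\]
Dividing by $x^2$ and letting $x \to \infty$, it follows that if $C(3,2)$ exists then $C(3,2) \ge \delta(S)$, where $\delta(S)$ denotes the natural density of $S$ (which exists by the sieve below). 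Thus it suffices to prove $\delta(S) \ge 0.322590$.

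Next I would compute $\delta(S)$. For square-free $a,b$ the integer $\gcd(a,b)$ is itself square-free, so on $S$ we have $\gcd(a,b) \in \{1,2\}$, giving a partition $S = S_1 \sqcup S_2$ with $S_i = \{(a,b) \in S : \gcd(a,b) = i\}$. The set $S_1$ is exactly the set of \emph{strongly carefree couples} in the terminology of Moree~\cite{moree}, whose density is the strongly carefree constant
\[
\delta(S_1) = K_2 := \prod_p \left(1 - \tfrac1p\right)^2\!\left(1 + \tfrac2p\right) = \prod_p \frac{(p-1)^2(p+2)}{p^3} = 0.2867474\ldots
\]
For $S_2$, writing $a = 2a'$ and $b = 2b'$ (so $a',b'$ are odd, square-free, and coprime) identifies $S_2$, after counting $a,b \le x$ hence $a',b' \le x/2$, with the set of odd square-free coprime pairs; either this scaling argument or a direct prime-by-prime local density computation yields
\[
\delta(S_2) = \frac{1}{16}\prod_{p > 2} \frac{(p-1)^2(p+2)}{p^3} = \frac18 K_2 ,
\]
the last equality because the $p=2$ factor of $K_2$ equals $\tfrac12$. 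Hence $\delta(S) = K_2 + \tfrac18 K_2 = \tfrac98 K_2 = 0.3225908\ldots > 0.322590$, completing the argument. (Equivalently, one can compute $\delta(S)$ directly as a single Euler product: local density $\tfrac{(p-1)^2(p+2)}{p^3}$ at each odd $p$ and $\tfrac{9}{16}$ at $p=2$.)

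The conceptual content is small: the essential input is Moree's evaluation of the strongly carefree constant, and the rest is a standard inclusion–exclusion over squared prime moduli combined with the Chinese Remainder Theorem to factor the count into an Euler product. The step most in need of care — and the only plausible obstacle — is checking that ``$a$ and $b$ are square-free'' and ``no odd prime divides both $a$ and $b$'' are genuinely independent congruence conditions at distinct primes, so that the local densities multiply, and that the sieve error terms are $o(x^2)$, so that $\delta(S)$ exists. Granting these standard facts, the chain $C(3,2) \ge \delta(S) = \tfrac98 K_2 > 0.322590$ gives the corollary.
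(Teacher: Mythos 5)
Your proposal is correct and follows essentially the same route as the paper: bound $C(3,2)$ below by the density of the square-free pairs with $\gcd(a,b)\le 2$ supplied by Theorem~\ref{main}, split into $\gcd=1$ versus $\gcd=2$, invoke Moree's strongly carefree constant $K$ for the first part, and show the second part contributes $K/8$ (your Euler-product form $\prod_p(1-1/p)^2(1+2/p)$ of the constant agrees with the paper's $\zeta(2)^{-2}\prod_p\left(1-(p+1)^{-2}\right)$). The only cosmetic difference is that you compute the $\gcd=2$ density directly via local densities and a scaling $a=2a'$, $b=2b'$, whereas the paper reaches the same $\tfrac{1}{8}K$ through an even/odd symmetry of the set of strongly carefree couples; both yield $\tfrac{9}{8}K\approx 0.322590$.
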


\section{Outline of methods, Theorem of Watson} \label{2}

Let $x=x_0+x_1\bi+x_2\bj+x_3\bk \in LQ_{a,b}$.  We call $x_0$ the {\em real} part of $x$ and $x_1\bi+x_2\bj+x_3\bk$ the {\em pure} part of $x$, with $x_1, x_2, x_3$ the {\em pure coefficients}.  Then note that 
\begin{equation} \label{al2}
	x^2=x_0^2-ax_1^2-bx_2^2 - abx_3^2 + 2x_0x_1\bi + 2x_0x_2\bj + 2x_0x_3\bk.
\end{equation}    

We then have that (see Equation (2) of Cooke et al. \cite{chw} or Theorem 1.1, part 2, of Pollack \cite{pollack})
\begin{equation} \label{sqform}
	LQ_{a,b}^2 = \{\alpha_0 + 2\alpha_1 \bi + 2\alpha_2 \bj + 2\alpha_3 \bk \mid \alpha_n \in \mathbb{Z}\}.
\end{equation}  

Our goal is, given $\alpha = \alpha_0+2\alpha_1\bi+2\alpha_2\bj+2\alpha_3\bk \in LQ_{a,b}^2$, to find $x,y,z \in LQ_{a,b}$ such that $\alpha = x^2 + y^2 + z^2$.  Our process will be overall similar to that used in Theorem \ref{modp2} of Cooke et al. $\cite{chw}$: we start by letting $z = 1 +\alpha_1\bi +\alpha_2\bj +\alpha_3\bk$, and note that $\alpha - z^2 \in \Z$.  To prove Theorem \ref{main}, it then suffices to prove that any integer can be written in the form $x^2 + y^2$ with $x,y \in LQ_{a,b}$.

If we stipulate that $x = x_0$ is an strictly real quaternion, and that $y = y_1\bi+y_2\bj+y_3\bk$ is a strictly pure quaternion, the problem then reduces to showing that the {\em indefinite} integral quadratic form $x_0^2 - ay_1^2 - by_2^2 - aby_3^2$ is universal (i.e., represents all integers).  

We first need some terminology.  If $f(x_0, y_1, y_2 ,y_3)$ is a (in our case, quaternary) quadratic form and $d \in \Z$, we say that $f$ {\em represents} $d$ if there exist $r_0, r_1, r_2, r_3 \in \Z$ such that $f(r_0, r_1, r_2, r_3) = d$.  If there exist $r_0, r_1, r_2, r_3 \in \Z$ such $f(r_0, r_1, r_2, r_3) = d$ and $\gcd(r_0, r_1, r_2, r_3) = 1$, we say that $f$ {\em properly represents} $d$.  For $n \in \N$, we can similarly define when $f$ properly represents $d  \bmod n$.  We can then use the following theorem of Watson to prove that $f(x_0, y_1, y_2 ,y_3) = x_0 - ay_1^2 - by_2^2 - aby_3^2$ represents all integers.

\begin{theorem}[Theorem 53 of Watson \cite{watson}]\label{watsonthm}
Let $d$ be an integer, let $c_0,c_1,c_2,c_3$ be non-zero integers, not all the same sign, and let 
\[f(x_0, y_1, y_2 ,y_3) = c_0x_0^2 + c_1y_1^2 + c_2y_2^2 + c_3y_3^2.\]
Then if $f$ properly represents $d \bmod 2^4|c_0c_1c_2c_3|$, then $f$ represents $d$.
\end{theorem}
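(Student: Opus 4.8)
The plan is to read Theorem~\ref{watsonthm} as a local-to-global statement for a diagonal quaternary form, and to prove it in two independent stages: first, that for an \emph{indefinite} form in four variables global representability is detected by local (real and $p$-adic) representability; and second, that each local condition is in turn detected by the single congruence hypothesis. Throughout, write $\Delta = c_0c_1c_2c_3$, and note that the assumption ``not all the same sign'' is precisely the statement that $f$ is indefinite over $\mathbb{R}$.

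First I would establish the global-from-local principle. Because $f$ has rank $4 \geq 3$ and is indefinite, the associated $\mathbb{Z}$-lattice lies in a single class within its spinor genus (Eichler's theorem), and for indefinite lattices of rank at least $4$ there are no spinor exceptions for the representation of a single integer; consequently $f$ represents $d$ over $\mathbb{Z}$ if and only if $f$ represents $d$ over $\mathbb{R}$ and over every $\mathbb{Z}_p$. This is the step that genuinely uses $\mathrm{rank} \geq 4$: it is exactly the rank-$3$ case where spinor obstructions would force extra conditions, which is why the hypotheses fix four variables. The real place is then immediate, since an indefinite nondegenerate real form in at least two variables represents every real number and hence represents $d$.

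Next I would show that the congruence hypothesis supplies $p$-adic representability at every prime. At odd primes $p \nmid 2\Delta$ the form $f$ is $\mathbb{Z}_p$-unimodular of rank $\geq 3$, hence represents every $p$-adic integer, so those places impose no condition at all. At the remaining primes $p \mid 2\Delta$ the strategy is Hensel lifting: a \emph{proper} solution $\mathbf{r}$ of $f(\mathbf{r}) \equiv d \pmod{p^{v_p(2^4\Delta)}}$ has at least one coordinate $r_j$ that is a $p$-adic unit (this is exactly where primitivity is used), and then the partial derivative $\partial f/\partial y_j = 2c_j r_j$ controls the lift. The role of the modulus $2^4|\Delta|$ is that its $p$-part is large enough relative to the valuation of the relevant derivative to satisfy the hypothesis $N \geq 2\delta + 1$ of the multivariate Hensel/Newton lemma; the dyadic factor is boosted to $2^4$ precisely to absorb the extra $v_2(2) = 1$ that appears in every $2$-adic derivative.

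The main obstacle is this bad-prime lifting, and above all the case $p = 2$. One cannot lift naively from a single coordinate when the unit coordinate sits at a $c_j$ of large valuation, so the careful argument requires reducing $f$ over $\mathbb{Z}_2$ to a Jordan splitting and checking, square class by square class, that primitive representation modulo $2^{4 + v_2(\Delta)}$ forces representation over $\mathbb{Z}_2$; the exponent $4$ should be the smallest slack that makes every dyadic case succeed. Verifying this explicit constant, rather than some larger unspecified power, is the delicate heart of the statement and is where I expect the bulk of the work (and the heaviest reliance on Watson's reduction theory) to concentrate. Once local representability is secured at every place, the global principle of the first stage completes the proof.
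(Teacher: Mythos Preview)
The paper does not prove this statement at all: it is quoted verbatim as Theorem~53 of Watson's monograph \cite{watson} and used as a black box, so there is no in-paper argument against which to compare your proposal.

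That said, your sketch is a reasonable \emph{modern} route to the result, and it is worth noting how it differs from Watson's own treatment. Watson's 1960 book develops the theory in a self-contained, largely elementary way: his Theorem~53 is reached through his reduction theory and explicit congruence manipulations built up over earlier chapters, rather than by invoking Eichler's strong approximation and the absence of spinor exceptions in rank $\geq 4$. Your two-stage decomposition (local--global via spinor genus theory, then Hensel lifting at primes dividing $2\Delta$) is conceptually cleaner and explains transparently \emph{why} four variables and indefiniteness matter, whereas Watson's argument is more hands-on but does not presuppose the genus/spinor-genus machinery. Both approaches ultimately have the dyadic analysis as the delicate point; your acknowledgment that verifying the precise exponent $2^4$ (rather than some larger power) at $p=2$ is the crux is accurate, and that is indeed where a full write-up following your outline would need the most care.
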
 

In our case, $c_0 = 1$, $c_1 = -a$, $c_2 = -b$, and $c_3 = -ab$, so we have $2^4|c_0c_1c_2c_3| = 2^4a^2b^2$.  Recall that our hypotheses state that $a$ and $b$ are square-free and $\gcd(a,b) \leq 2$.
Our method will therefore be to show that for all square-free integers $d$ and all primes $p$ dividing $ab$, $f$ properly represents $d \bmod p^2$, $f$ properly represents $d$ modulo an appropriate power of 2, and that we can ``glue'' these representations together such that $f$ properly represents $d \bmod 2^4a^2b^2$.  Section \ref{mainthm} then proves Theorem \ref{main}, and Section \ref{density} calculates the density of pairs of integers in $\N \times \N$ covered by Theorem \ref{main}.
\section{Universality of $f$}
\label{modstuff}

\subsection*{Representing integers mod $p^2$}

Throughout this and the following sections, we fix square-free $a,b \in \N$ with $\gcd(a,b) \leq 2$, and fix 
\[f(x_0, y_1, y_2, y_3) = x_0^2 - ay_1^2 - by_2^2 - aby_3^2.\]

We want to show that $f$ is {\em universal}, that is, that $f$ represents all integers.  Note that it suffices to show that $f$ represents all square-free integers, since for any $m \in \Z$, 
\[f(mx_0, my_1, my_2, my_3) = m^2(x_0^2 - ay_1^2 - by_2^2 - aby_3^2).\]

We will need a basic version of Hensel's Lemma to prove that $f$ represents all square-free integers mod $p^2$, for odd primes $p$ dividing $ab$.

\begin{lemma}[Hensel's Lemma]
Given $h(x) \in \Z[x]$, $c \in \Z$, and a prime $p$, if $h(c) \equiv 0 \bmod p$ and $h'(c) \not \equiv 0 \bmod p$, then there exists $\gamma \in \Z$ such that $\gamma \equiv c \bmod p$ and $h(\gamma) \equiv 0 \bmod p^{2}$.
\end{lemma}

Given Theorem \ref{watsonthm}, we start with the following theorem.

\begin{theorem} \label{modp2} Suppose $d \in \Z$ is square-free. Then for all odd primes $p$ dividing $ab$, $f$ properly represents $d \bmod p^2$.
\end{theorem}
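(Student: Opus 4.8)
The goal is, for a fixed square-free $d$ and a fixed odd prime $p \mid ab$, to exhibit integers $r_0, r_1, r_2, r_3$ with $\gcd(r_0,r_1,r_2,r_3)=1$ and $r_0^2 - ar_1^2 - br_2^2 - abr_3^2 \equiv d \pmod{p^2}$. Since $\gcd(a,b) \le 2$ and $p$ is odd, $p$ divides exactly one of $a$, $b$; by the symmetry of the form (swapping the roles of $y_1$ and $y_2$, $a$ and $b$) I may assume $p \mid a$ and $p \nmid b$. Because $a$ is square-free, $p^2 \nmid a$, so modulo $p^2$ the coefficient $-a$ of $y_1^2$ is divisible by $p$ but not $p^2$, and likewise $-ab$; the form reduces mod $p^2$ essentially to $x_0^2 - b y_2^2$ plus terms that are multiples of $p$.

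The plan is to split into two cases according to whether $p \mid d$ or $p \nmid d$. \textbf{Case $p \nmid d$.} Here I want to solve $x_0^2 - b y_2^2 \equiv d \pmod{p^2}$ with $p \nmid x_0$ (which immediately gives proper representation, taking $r_1 = r_3 = 0$). It suffices to solve it mod $p$ and then lift by Hensel's Lemma applied to $h(x_0) = x_0^2 - b r_2^2 - d$ for a suitable fixed $r_2$: since a binary form $x_0^2 - b y_2^2$ over $\mathbb{F}_p$ represents every nonzero residue (the number of values of $x_0^2 - by_2^2$ is more than $p/2$ in each "coset", and a standard counting/character-sum argument shows $x_0^2 - by_2^2 \equiv d$ has a solution, in fact with $x_0 \not\equiv 0$ since $d \ne 0$), we get a mod-$p$ solution with $h'(x_0) = 2x_0 \not\equiv 0 \pmod p$, and Hensel lifts it to mod $p^2$. \textbf{Case $p \mid d$.} Write $d = p d'$; since $d$ is square-free, $p \nmid d'$. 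Now I cannot use $r_1 = r_3 = 0$, because $x_0^2 - by_2^2 \equiv 0 \pmod p$ forces (as $p \nmid b$) $x_0 \equiv y_2 \equiv 0 \pmod p$, destroying properness — so I must use the $y_1$ (or $y_3$) variable. Writing $a = p a'$ with $p \nmid a'$ (as $a$ is square-free), the term $-ar_1^2 = -p a' r_1^2$ contributes at the $p$-level: I want $x_0^2 - b y_2^2 \equiv 0 \pmod{p}$ handled by $x_0 = y_2 = 0$ to leading order, and then choose $r_1$ with $p \nmid r_1$ (ensuring properness) so that $-pa' r_1^2 \equiv d = pd' \pmod{p^2}$, i.e. $-a' r_1^2 \equiv d' \pmod p$; more carefully, set $x_0 = px_0'$, $y_2 = py_2'$ absorbs into $p^2$, so really I need $-a'r_1^2 - b r_2^2 \cdot(\text{something}) \equiv d' \pmod p$ — the cleanest route is to solve $-a' r_1^2 - b' r_3^2 \equiv d' \pmod p$ using the analogous binary-form-is-universal fact over $\mathbb{F}_p$ (using $y_3$, whose coefficient $-ab = -pa'b$ also carries one factor of $p$), with $\gcd(r_1, r_3, p) = 1$, and take $r_0 = r_2 = 0$.

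The main obstacle is bookkeeping the properness condition together with the two-level ($p$ vs.\ $p^2$) structure when $p \mid d$: I must make sure that the reduction-mod-$p$ solvability statements I invoke can always be met with the relevant variable coprime to $p$, and I must double-check the edge case $p = 2$ is genuinely excluded (it is, since we assumed $p$ odd and handle $2$ separately) and the case where $\gcd(a,b) = 2$ and $p$ is odd still forces $p$ to divide only one of $a,b$. I expect the underlying number theory — that $x^2 - cy^2$ is universal mod an odd prime $p \nmid c$, and represents $0$ only trivially — to be entirely routine via counting or Legendre symbols; the care is purely in organizing the case split so that a coprime tuple is produced in every case.
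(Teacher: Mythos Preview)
Your plan matches the paper's approach almost exactly (with the roles of $a$ and $b$ swapped): split on whether $p \mid d$; in the case $p \nmid d$ use the two variables whose coefficients are units mod $p$, solve the resulting binary form mod $p$, and lift by Hensel; in the case $p \mid d$ use the two variables whose coefficients are divisible by exactly one power of $p$, divide through by $p$, and solve the resulting binary form mod $p$.

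There is one genuine slip. In the case $p \nmid d$ you assert that $x_0^2 - by_2^2 \equiv d \pmod p$ can always be solved with $x_0 \not\equiv 0$, ``since $d \ne 0$''. This is false: for $p = 3$, $b \equiv 1 \pmod 3$, $d \equiv 2 \pmod 3$, the only solutions to $x_0^2 - y_2^2 \equiv 2 \pmod 3$ are $(x_0,y_2)=(0,\pm 1)$, so Hensel on $h(x_0)=x_0^2 - b r_2^2 - d$ is blocked. The correct statement is only that at least one of $x_0, y_2$ is nonzero mod $p$ (since $d \not\equiv 0$), and the fix --- which is exactly what the paper does --- is to apply Hensel to whichever of the two is a unit. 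A second, smaller point: you state the goal as $\gcd(r_0,r_1,r_2,r_3)=1$ over $\Z$, but then claim that $p \nmid r_0$ with $r_1=r_3=0$ ``immediately gives proper representation''; it does not, since $\gcd(r_0,r_2)$ could be any prime other than $p$. The paper handles this by setting one of the variables with $p$-divisible coefficient equal to $p$ (which does not change the value mod $p^2$), forcing the overall $\gcd$ to divide $\gcd(r_0,p)=1$.
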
 

\begin{proof}
Consider $p$ prime $\in \N$ such that $p|b, p \nmid a$, without loss of generality.  We are trying to find $x_0, y_i$ such that $$d \equiv x_0^2 -ay_1^2 -by_2^2 -aby_3^2 \bmod p^2.$$ 

\underline{Case 1:} $d \not\equiv 0 \bmod p.$

We will first try to find $x_0, y_i$ such that $$d + ay_1^2 \equiv x_0^2 \bmod p^2.$$  
 
Note that since $v^2 \equiv (-v)^2 \bmod p$, there are (including 0) $\frac{p+1}{2}$ quadratic residues mod $p$.  Thus, $d + ay_1^2$ and $x_0^2$ each must be one of $\frac{p + 1}{2}$ distinct possible residues.
Therefore, by the Pigeonhole Principle, there must exist $\beta_0 \equiv x_0 \bmod p$ and $\beta_1 \equiv y_1 \bmod p$ such that 
\begin{equation}
    d + a\beta_1^2 \equiv \beta_0^2 \bmod p^2. \label{hen1}
\end{equation}
Now, we must consider the cases where $\beta_0 \equiv 0 \bmod p$ and where $\beta_0 \not \equiv 0 \bmod p$ in order to find our proper representation.

\par \underline{Case 1a:} $\beta_0 \not \equiv 0 \bmod p$.

Let $h(x) = x^2 - a\beta_1^2 - d$. We know that $\beta_0$ is a root of $h(x) \bmod p$, due to Equation (\ref{hen1}). Since $h'(\beta_0) \equiv 2\beta_0 \not \equiv 0 \bmod p$, by Hensel's Lemma, there exists $\delta_0 \in \Z$, such that $\delta_0 \equiv \beta_0 \bmod p$ and $h(\delta_0) \equiv 0 \bmod p^2$. Therefore, $$d \equiv \delta_0^2 - a\beta_1^2 \bmod p^2.$$ 

Notice then that $f(\delta_0, \beta_1, 0, 0) \equiv d \bmod p^2$, meaning that we have a representation of $d \bmod p^2$; however, we are not guaranteed that this is a proper representation. Since $\delta_0 \equiv \beta_0 \not \equiv 0 \bmod p$, $\delta_0^2$ will not vanish mod $p^2$. Since $-bp^2$ will vanish mod $p^2$, 
$f(\delta_0, \beta_1, 0, 0) \equiv f(\delta_0, \beta_1, p, 0) \equiv d \bmod p^2$.
Since $\gcd(\delta_0, p) = 1$, we therefore have $f(\delta_0, \beta_1, p, 0)$ as a proper representation of $d \bmod p^2$.

\par \underline{Case 1b:} $\beta_0 \equiv 0 \bmod p$. 

Let $h(x) = ax^2 + d$. We know $\beta_1$ is a root of $h(x) \bmod p$. Since $h'(\beta_1) \equiv 2a\beta_1 \not \equiv 0 \bmod p$, by Hensel's Lemma, there exists $\delta_1 \equiv \beta_1 \bmod p$ and $f(\delta_1) \equiv 0 \bmod p^2$. So, $$d \equiv - a\delta_1^2 \bmod p^2.$$ 

Note then that $\delta_1 \equiv \beta_1 \not \equiv 0 \bmod p$, since it would imply $d \equiv 0 \bmod p^2$. Notice that $f(\beta_0, \delta_1, 0, 0) \equiv f(\beta_0, \delta_1, p, 0) \equiv d \bmod p^2$. Since $\gcd(\delta_1, p) = 1, f(\beta_0, \delta_1, p, 0)$ is therefore a proper representation of $d \bmod p^2$.

\underline{Case 2:} $d \equiv 0 \bmod p$, $d \not \equiv 0 \bmod p^2$. 

We will here try to find $y_2$ and $y_3$ such that
$$d \equiv -by_2^2 -aby_3^2 \bmod p^2.$$

Since $b \equiv d \equiv 0 \bmod p$, there exist $\widehat{b}, \widehat{d} \in \Z$ that are not $0 \bmod p$ such that $p\widehat{b} = b$ and $p\widehat{d} = d$. Since $b$ and $d$ are square-free, we have that $\widehat{b}, \widehat{d} \not \equiv 0 \bmod p$.
Then $$\widehat{d} \equiv -\widehat{b}y_2^2 -a\widehat{b}y_3^2 \bmod p.$$ 
Since $\widehat{b} \not \equiv 0 \bmod p$ and $p$ is prime, $\widehat{b}$ has an inverse mod $p$.
Multiplying by this inverse, 
\begin{align*}
    {\widehat{b}}^{-1} \widehat{d} &\equiv -y_2^2 -ay_3^2 \bmod p \\
    -(\widehat{b}^{-1}\widehat{d} + ay_3^2) &\equiv y_2^2 \bmod p
\end{align*}

Recall that there are $\frac{p + 1}{2}$ possible residues for $-(\widehat{b}^{-1}\widehat{d} + ay_3^2)$ and $y_2^2$.
Therefore, by the Pigeonhole Principle, there must exist $\beta_2 \equiv y_2 \bmod p$ and $\beta_3 \equiv y_3 \bmod p$ such that 
\begin{align*}
    -(\widehat{b}^{-1}\widehat{d} + a\beta_3^2) &\equiv \beta_2^2 \bmod p \\
    \widehat{d} &\equiv -\widehat{b}\beta_2^2 -a\widehat{b}\beta_3^2 \bmod p \\
    d &\equiv -b\beta_2^2 -ab\beta_3^2 \bmod p^2
\end{align*}
Notice that $f(0, 0, \beta_2, \beta_3) \equiv d \bmod p^2$. This is not guaranteed to be a proper representation. Since $d \not \equiv 0 \bmod p^2$ by the declaration of the case, at least one of $\beta_2, \beta_3 \not \equiv 0 \bmod p$. Thus, $\gcd(\beta_2, \beta_3, p) = 1$. So, $f(0, 0, \beta_2, \beta_3) \equiv f(0, p, \beta_2, \beta_3) \equiv d \bmod p^2$, ensuring a proper representation.

\end{proof}

\subsection*{Representing integers mod $2^{\nu}$} \label{mod2}

For this section, let $\nu$ be the highest power of 2 dividing $2^4a^2b^2$. (So $2^{\nu} = 16, 64$, or 256, since $a$ and $b$ are square-free.)

\begin{lemma}
Suppose $d \not \equiv 0 \bmod 4$.  Then $f$ properly represents $d \bmod 2^{\nu}$.
\end{lemma}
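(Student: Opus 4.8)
The plan is to reduce the problem of (properly) representing $d$ modulo the large power $2^{\nu}$ to the same problem modulo a small power of $2$, and then climb back up by a version of Hensel's lemma tailored to the prime $2$. Write $c_0=1$, $c_1=-a$, $c_2=-b$, $c_3=-ab$ for the coefficients of $f$, and let $v$ denote the $2$-adic valuation; note $v(c_j)\le 2$ for all $j$ since $a,b$ are square-free. The lifting step I would isolate is: if $f$ properly represents $d$ modulo $2^{k}$ via $(r_0,r_1,r_2,r_3)$ and, for some $j$, the coordinate $r_j$ is odd while $k\ge v(c_j)+3$, then replacing $r_j$ by $r_j+2^{\,k-v(c_j)-1}\,t$ changes the value of $f$ by $2^{k}ut$ modulo $2^{k+1}$ for a fixed odd $u$, so a suitable choice of $t$ modulo $2$ produces a proper representation of $d$ modulo $2^{k+1}$, with $r_j$ still odd. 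Iterating reaches $2^{\nu}$. In particular, because $v(c_0)=0$ it is enough to exhibit a proper representation modulo $8$ with $x_0$ odd, and because $v(-a),v(-b)\le 1$ it is enough to exhibit one modulo $16$ with $y_1$ or $y_2$ odd. So the whole lemma reduces to producing, for every $d\not\equiv 0\bmod 4$, one such base representation.

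I would then split on $d\bmod 4$. When $d$ is odd, take $x_0$ odd, so $x_0^2\equiv 1\bmod 8$, and it remains to choose $y_1,y_2,y_3$ with $ay_1^2+by_2^2+ab\,y_3^2\equiv 1-d\bmod 8$, where $1-d$ is even; since $y_i^2\bmod 8\in\{0,1,4\}$, a short check over the residues of $a$ and $b$ modulo $8$ shows this is always solvable, and the lift through $x_0$ then finishes this case.

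The case $d\equiv 2\bmod 4$ is the heart of the argument, and where I expect the real work. One cannot in general succeed modulo $8$ with $x_0$ odd: for $a\equiv b\equiv 1\bmod 8$ and $d\equiv 2\bmod 8$ this would require $y_1^2+y_2^2+y_3^2\equiv 7\bmod 8$, which is impossible; and once $a$ or $b$ is even, $f$ being even forces $x_0$ even, so $x_0$ cannot serve as the lifting coordinate at all. Instead I would work modulo $16$, take $x_0=2u$, and keep exactly one $y_i$ odd (choosing, when both $a$ and $b$ are even, one of $y_1,y_2$), which simultaneously secures properness and makes the lift go through. The calculation closes because the contribution of the fixed odd $y_i$ modulo $16$ does not depend on which odd value is chosen and is $\equiv 2\bmod 4$, while the remaining free choices — the parity of $u$, and whether a further variable is taken $\equiv 2\bmod 4$ — contribute independent shifts of $4$ and of $8$ modulo $16$, which together realise every residue in $\{0,4,8,12\}\bmod 16$; adding these to the fixed term yields all residues $\equiv 2\bmod 4$, in particular $d\bmod 16$. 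A finite check on $a,b$ modulo $8$ handles the bookkeeping in the various parity cases.

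Feeding these base representations into the lifting step gives proper representations of $d$ modulo $2^{\nu}$ in every case, which is the lemma. The odd-$d$ case and the lifting iteration itself are routine; the substance of the proof is the $d\equiv 2\bmod 4$ analysis, where the naive mod-$8$ argument fails and the lift must be routed through a variable with a (possibly) even coefficient.
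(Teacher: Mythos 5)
Your lifting step is correct and is in fact a cleaner, more uniform version of what the paper does: the paper only ever lifts through a unit coefficient via the fact that $8m+1$ is a quadratic residue mod $2^{\nu}$, and then handles even coefficients by ad hoc ``divide by $2$'' reductions, whereas your condition $k\ge v(c_j)+3$ packages all of that at once. The odd-$d$ base case also goes through (the properness bookkeeping when all $y_i$ are taken even is glossed over, but is easily repaired by replacing a zero coordinate with a large power of $2$).

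The genuine gap is in the case $d\equiv 2\bmod 4$ with $a$ and $b$ both odd. There your prescription --- $x_0=2u$ even and \emph{exactly one} $y_i$ odd --- is impossible on parity grounds: all of $-a,-b,-ab$ are odd, so $f(2u,y_1,y_2,y_3)$ is odd whenever exactly one $y_i$ is odd, and hence cannot be congruent to $d\equiv 2\bmod 4$. Relatedly, the key claim driving your mod-$16$ computation --- that the contribution $c_iy_i^2$ of the fixed odd coordinate is independent of the choice of odd value modulo $16$ and is $\equiv 2\bmod 4$ --- holds precisely when $v(c_i)=1$ (since $c_i\cdot 9-c_i=8c_i$ must vanish mod $16$); no such coefficient exists when $a,b$ are both odd, so the entire $d\equiv 2\bmod 4$ analysis silently assumes at least one of $a,b$ is even. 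This is exactly the subcase your own counterexample ($a\equiv b\equiv 1\bmod 8$, $d\equiv 2\bmod 8$) lives in, so it cannot be dismissed. The repair is what the paper's Case 1 does: among $1,-a,-b,-ab$ three coefficients share a residue $\pm 1\bmod 4$, and taking \emph{two} of the corresponding variables odd (rather than one) represents $d\equiv 2\bmod 4$, after which the remaining square adjusts mod $8$ and your own lifting step, applied through either odd variable (both have unit coefficients, so $k=3$ suffices), climbs to $2^{\nu}=16$.
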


\begin{proof}
We will repeatedly use the following fact (from a version of Hensel's Lemma): 
\begin{equation*}
    \text{If } \nu \geq 3, \text{ and }m \in \Z, \text{ then }8m + 1\text{ is a quadratic residue mod }2^{\nu}.  \tag{$\ddagger$}
\end{equation*}
The benefit of this fact is that if (for example) there exists a solution to 
\begin{equation*} \label{repmod2t}
    x_0^2 - ay_1^2 - by_2^2 - aby_3^2 \equiv d \bmod 8.
\end{equation*}
with $x_0$ odd and $\gcd(x_0,y_1,y_2,y_3) = 1$, then $x_0^2 \equiv 1 \bmod 8$, and there exists $m$ such that
\begin{equation*}
     (8m+1) - ay_1^2 - by_2^2 - aby_3^2 \equiv d  \bmod 2^{\nu}.   
\end{equation*}
Since $8m+1$ is a quadratic residue, and our modulus is a power of 2 with $\nu \geq 3$, there must exist $\widehat{x_0} \in \Z$ such that $\gcd(\widehat{x_0},y_1,y_2,y_3) = 1$ and 
\begin{equation*}
     \widehat{x_0}^2 - ay_1^2 - by_2^2 - aby_3^2 \equiv d  \bmod 2^{\nu}.
\end{equation*}

We then have three cases: when $a$ and $b$ are both odd, when exactly one of $a$ and $b$ are odd, and when both $a$ and $b$ are even.

\underline{Case 1:} Suppose that $a$ and $b$ are both odd, so $2^{\nu} = 16$; we are then trying to find solutions to 
\begin{equation} \label{repmod16}
    x_0^2 - ay_1^2 - by_2^2 - aby_3^2 \equiv d \bmod 16.
\end{equation}
Since $d \not \equiv 0 \bmod 4$, at least one of $x_0$ or one of the $y_i$s must be odd; since $a,b$ and $ab$ are odd, it does not matter which of the variables is odd. So by ($\ddagger$) and the method above, we only need to solve Equation (\ref{repmod16}) mod 8.

First, note that exactly one or three of $1, -a, -b$, and $-ab$ will be $1 \bmod 4$.  (Similarly three or one of them will be $-1 \bmod 4$).  So, since three of $1, -a, -b$ and $-ab$ must have the same residue mod 4 (either 1 or $-1$), we can use those three terms (out of $x_0^2$, $-ay_1^2$, $-by_2^2$, and $-aby_3^2$) to represent $d \bmod 4$, and let the other square be 0 or 4 to ensure that we represent $d \bmod 8$.  And finally, we use ($\ddagger$) as necessary to represent $d \bmod 16$.

To ensure that we are properly representing $d$, suppose that from above we have that $f(x_0,y_1,y_2,y_3) \equiv d \bmod 16$; note that in our construction above at least one of the variables must be odd.  Therefore $\gcd(x_0,y_1,y_2,y_3)$ must be odd.  However, we also chose in our construction one of the variables such that its square was 0 or 4; note that we can choose this variable to be a power of $2$ (since $4^2 \equiv 0 \bmod 16$ and $2^2 \equiv 4 \bmod 16$).  So if $\gcd(x_0,y_1,y_2,y_3)$ is odd, it must equal 1, thus we have that we can properly represent $d \bmod 16$.

\underline{Case 2:} Suppose, without loss of generality, that $a$ is odd and $b \equiv 2 \bmod 4$; we are now trying to find solutions to 
\begin{equation*} \label{repmod64}
    x_0^2 - ay_1^2 - by_2^2 - aby_3^2 \equiv d \bmod 64.
\end{equation*}
Note that we have $-b \equiv -ab \equiv 2 \bmod 4$.  Then, since $d \not\equiv 0 \bmod 4$, if $-a \equiv 1 \bmod 4$ we can represent $d \bmod 8$ with $x_0$ odd, allowing us to use ($\ddagger$) to represent $d \bmod 64$.

If $-a \equiv 3 \bmod 4$, the only problematic case is when $d \equiv 2 \bmod 4$.  But in this case we can represent $d \bmod 16$ with $y_2$ odd and $x_0,y_3 \in \{0,2\}$.  Since in this case $b$ and $d$ are even, representing $d \bmod 16$ is sufficient to allow us to use ($\ddagger$) to represent $d \bmod 64$.

\underline{Case 3:} Suppose that $a \equiv b \equiv 2 \bmod 4$; we are now trying to find solutions to 
\begin{equation} \label{repmod256}
    x_0^2 - ay_1^2 - by_2^2 - aby_3^2 \equiv d \bmod 256.
\end{equation}
In this case, we have $-a \equiv -b \equiv 2 \bmod 4$, and $-ab \equiv 4 \bmod 8$.  If $d$ is odd, then by ($\ddagger$) we only have to represent $d \bmod 8$; choosing $y_1, y_3 \in \{0,1\}$ lets us represent every odd residue class.  The argument in the last paragraph of Case 1 similarly ensures that we can properly represent $d \bmod 256$.

If $d$ is even, then any solution to Equation (\ref{repmod256}) must have $x_0$ even; we are then trying to find solutions to 
\begin{equation*} \label{repmod128}
    \frac{1}{2}x_0^2 - \frac{1}{2}ay_1^2 - \frac{1}{2}by_2^2 - \frac{1}{2}aby_3^2 \equiv d \bmod 128.
\end{equation*}
But this is the same situation (2 odd coefficients, 2 coefficients equivalent to $2 \bmod 4$) as Case 2, as ($\ddagger$) again allows us to reduce the problem to finding a proper representation mod 8.
\end{proof}

We are now ready to glue these representations together and apply Watson's Theorem.

\subsection*{Representing integers mod $2^4a^2b^2$} \label{crt}

The following lemma ensures that if $f$ properly represents $d$ modulo each prime power dividing $2^4a^2b^2$, then $f$ properly represents $d \bmod 2^4a^2b^2$.

\begin{lemma} \label{gluing}
Take $c \in \Z$, $m,n \in \N$ and let $f(x_0,y_1,y_2,y_3) = x_0^2 -ay_1^2 - by_2^2 -aby_3^2$.  Suppose for $i \in \{0,1,2,3\}$ that $\alpha_i, \beta_i$ are integers such that 
\begin{itemize}
	\item $f(\alpha_0,\alpha_1,\alpha_2,\alpha_3) \equiv c \bmod m$, 
	\item $f(\beta_0,\beta_1,\beta_2,\beta_3) \equiv c \bmod n$, and 
	\item $\gcd(\alpha_0,\alpha_1,\alpha_2,\alpha_3) = \gcd(\beta_0,\beta_1,\beta_2,\beta_3) = 1$.  
\end{itemize}
Then there exists integers $\gamma_0,\gamma_1,\gamma_2,\gamma_3$ such that ${f(\gamma_0,\gamma_1,\gamma_2,\gamma_3) \equiv c \bmod mn}$ and $\gcd(\gamma_0,\gamma_1,\gamma_2,\gamma_3) = 1$.
\end{lemma}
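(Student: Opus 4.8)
The plan is to prove Lemma \ref{gluing} by a straightforward Chinese Remainder Theorem (CRT) argument, with the only real content being the bookkeeping needed to preserve the ``proper'' (gcd $=1$) condition. I will assume $\gcd(m,n) = 1$ throughout --- this is the only case that is used, since the lemma is applied to pairwise coprime prime powers ($2^{\nu}$ and the $p^2$ for odd $p \mid ab$ are coprime because $a,b$ are square-free with $\gcd(a,b) \leq 2$) --- and it is exactly the hypothesis under which CRT applies.

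First I would fix, for each $i \in \{0,1,2,3\}$, an integer $\gamma_i$ with $\gamma_i \equiv \alpha_i \bmod m$ and $\gamma_i \equiv \beta_i \bmod n$; such a $\gamma_i$ exists and is determined modulo $mn$. Because $f$ has integer coefficients, reducing modulo $m$ gives $f(\gamma_0,\gamma_1,\gamma_2,\gamma_3) \equiv f(\alpha_0,\alpha_1,\alpha_2,\alpha_3) \equiv c \bmod m$, and similarly $f(\gamma_0,\gamma_1,\gamma_2,\gamma_3) \equiv c \bmod n$; since $\gcd(m,n)=1$ these combine to $f(\gamma_0,\gamma_1,\gamma_2,\gamma_3) \equiv c \bmod mn$, so the value condition is automatic. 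The delicate step is to arrange $\gcd(\gamma_0,\gamma_1,\gamma_2,\gamma_3) = 1$. The key observation is that no prime $q \mid mn$ can divide all four $\gamma_i$: if $q \mid m$ (the case $q \mid n$ is symmetric, and these exhaust the primes dividing $mn$), then $\gamma_i \equiv \alpha_i \bmod q$ for every $i$, so $q \mid \gamma_i$ for all $i$ would force $q \mid \gcd(\alpha_0,\alpha_1,\alpha_2,\alpha_3) = 1$, a contradiction. Hence $\gcd(\gamma_0,\gamma_1,\gamma_2,\gamma_3,mn) = 1$. Now I exploit the fact that the $\gamma_i$ are only pinned down modulo $mn$: replace $\gamma_0$ by $\gamma_0 + t\,mn$ for a suitable $t \in \Z$. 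Writing $\delta = \gcd(\gamma_1,\gamma_2,\gamma_3)$, any prime $q \mid \delta$ with $q \mid mn$ already satisfies $q \nmid \gamma_0$ by the observation above, so $q \nmid \gamma_0 + t\,mn$ for every $t$; and for each of the finitely many primes $q \mid \delta$ with $q \nmid mn$, exactly one residue of $t$ modulo $q$ makes $q \mid \gamma_0 + t\,mn$, so one more CRT application produces a single $t$ avoiding all of them. For that $t$ we get $\gcd(\gamma_0 + t\,mn, \delta) = 1$, hence $\gcd(\gamma_0 + t\,mn, \gamma_1, \gamma_2, \gamma_3) = 1$, while the congruence $f(\gamma_0,\gamma_1,\gamma_2,\gamma_3) \equiv c \bmod mn$ is unchanged by shifting a coordinate by a multiple of $mn$. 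Taking $\gamma_0 + t\,mn$ as the new $\gamma_0$ finishes the proof.

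I expect the only obstacle to be precisely this gcd management: the ``represents'' half of the conclusion is literally CRT, but keeping the glued tuple primitive requires both the observation that no prime dividing $mn$ is common to all coordinates and a check that the corrective shift does not reintroduce a common factor at those primes. A slightly different route --- shifting all four coordinates simultaneously and applying a primitivity criterion --- would also work, but the single-coordinate adjustment above seems to be the cleanest.
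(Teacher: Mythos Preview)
Your proof is correct and follows essentially the same approach as the paper: both arguments lift via CRT to get $\gamma_i$ with $\gamma_i\equiv\alpha_i\bmod m$, $\gamma_i\equiv\beta_i\bmod n$, observe that no prime dividing $mn$ can divide all four $\gamma_i$ (else it would divide all $\alpha_i$ or all $\beta_i$), and then shift $\gamma_0$ by a multiple of $mn$ to kill any remaining common prime factors coming from $\gcd(\gamma_1,\gamma_2,\gamma_3)$. The only cosmetic difference is in the choice of shift: the paper writes down an explicit multiplier $E$ (the largest factor of $\gcd(\gamma_1,\gamma_2,\gamma_3)$ coprime to $\gcd(\gamma_0,\gamma_1,\gamma_2,\gamma_3)$) and checks by a short case split that $\gamma_0+Emn$ works, whereas you invoke CRT a second time to avoid the bad residue class modulo each prime $q\mid\delta$ with $q\nmid mn$. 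Both the paper and you tacitly use $\gcd(m,n)=1$, which, as you note, is the only case needed in the application; one very minor point is that your ``finitely many primes $q\mid\delta$'' step wants $\delta\neq 0$, which you can guarantee for free by choosing the CRT representatives $\gamma_1,\gamma_2,\gamma_3$ in $\{1,\dots,mn\}$.
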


\begin{proof}
By the Chinese Remainder Theorem we know that we can combine the $\alpha_i$s and $\beta_i$s to represent $c \bmod mn$. Notice that while we are guaranteed a representation of $c \bmod mn,$ the Chinese Remainder Theorem does not guarantee a \textit{proper} representation.

Let $\gamma_i = nt\alpha_i + ms\beta_i$ where $s, t \in \Z$ such that $ms \equiv 1 \bmod n$ and $nt \equiv 1 \bmod m$. Note that $f(\gamma_0,\gamma_1,\gamma_2,\gamma_3) \equiv f(\alpha_0,\alpha_1,\alpha_2,\alpha_3) \bmod m$ and $f(\gamma_0,\gamma_1,\gamma_2,\gamma_3) \equiv f(\beta_0,\beta_1,\beta_2,\beta_3) \bmod n$. We then have $f(\gamma_0,\gamma_1,\gamma_2,\gamma_3) \equiv c \bmod mn$ by the Chinese Remainder Theorem; we will show that we can use the $\gamma_i$s to get a proper representation.

Let $D = \gcd(\gamma_0, \gamma_1, \gamma_2, \gamma_3)$. Notice that if $D = 1$, then we have a proper representation, so assume $D > 1$.

Suppose $\gcd(D, mn) > 1$.  Without loss of generality, we can assume that $\gcd(D, m) > 1$. Since $\gcd(D, m) > 1$, there exists some prime $q$ such that $q | m$ and $q | D$. Recall that $\gamma_i = ms\alpha_i + nt\beta_i$. Since $q | m$, $ms\beta_i$ vanishes mod $q$. Since $q|m$ and $nt \equiv 1 \bmod m$, $nt \equiv 1 \bmod q$. Thus, $\gamma_i \equiv \alpha_i \bmod q$. Since $q | D$, we have $q | \gamma_i$. Therefore, $q | \alpha_i$ which means that $\gcd(\alpha_0,\alpha_1,\alpha_2,\alpha_3) > 1$, which contradicts the hypothesis that $\gcd(\alpha_0,\alpha_1,\alpha_2,\alpha_3) = 1$. So, $\gcd(D, mn) = 1$.
Let $F = \gcd(\gamma_1, \gamma_2, \gamma_3)$, and let $E$ be the largest factor of $F$ coprime to $D$. Then by definition, $\gcd(D, E) = 1$ and $\gcd(\gamma_0, E) = 1$. Let $\widehat{\gamma_0} = \gamma_0 + Emn$. Since $\widehat{\gamma_0}^2 \equiv \gamma_0^2 \bmod mn$, we have that $f(\widehat{\gamma_0}, \gamma_1, \gamma_2, \gamma_3) \equiv c \bmod mn$. 

We then need to show that this is a proper representation of $c$.  Note that $\gcd(D,mn) = \gcd(\gamma_0, F, mn) = 1$.  Suppose then that $p$ is a prime such that  $p|\gcd(\widehat{\gamma_0},F)$.  Then $p|\gamma_0 + Emn$.  First, suppose that $p|E$.  Then since $p|E$ and $p|\gamma_0 + Emn$, we have that $p|\gamma_0$.  So, since $p|F$ and $p|\gamma_0$, we have that $p|D$.  But then $p|D$ and $p|E$, which contradicts the definition of $E$.

So $p\nmid E$.  But then since $p|F$ and $p\nmid E$, the definition of $E$ implies that $p|D$.  But then $p|\gamma_0$, and since $p|\widehat{\gamma_0} = \gamma_0 + Emn$, we have that $p|Emn$.  We assumed that $p \nmid E$, so we must have $p|mn$.  But then $p|\gcd(D,mn)=1$, a contradiction.  So $\gcd(\widehat{\gamma_0},F) = \gcd(\widehat{\gamma_0}, \gamma_1, \gamma_2, \gamma_3) = 1$, and we have a proper representation of $c \bmod mn$.
\end{proof}

\section{Proof of Main Theorem} \label{mainthm}

We can now assemble the results of the previous sections to prove Theorem \ref{main}.

\begin{proof}[Proof of Theorem \ref{main}]
Let $a$ and $b$ be positive square-free integers such that $\gcd(a,b) \leq 2$.  By Lemma 2.3 of Cooke et al. \cite{chw}, there are elements of $LQ_{a,b}$ that cannot be written as the sum of two squares, so $g_{a,b} (2) \geq 3$.

Then, take $\alpha = \alpha_0+2\alpha_1\bi+2\alpha_2\bj+2\alpha_3\bk \in LQ_{a,b}^2$; we need to prove that there exist $x,y,z \in LQ_{a,b}$ such that $\alpha = x^2 + y^2 + z^2$.  As noted at the beginning of Section \ref{2}, if we let $z = 1 +\alpha_1\bi +\alpha_2\bj +\alpha_3\bk$, then $\alpha - z^2 \in \Z$.  Letting $x = x_0 \in \Z$ and $y = y_1\bi + y_2 \bj + y_3 \bk$, it then suffices to then show that $f(x_0,y_1,y_2,y_3) = x_0^2 - ay_1^2 - by_2^2 - aby_3^2$ represents all integers.

By the note at the beginning of Section \ref{modstuff}, it suffices to show the $f$ represents all square-free integers.  By Theorem \ref{modp2}, we know that $f$ properly represents all square-free integers mod $p^2$ for all odd primes $p$ dividing $ab$, and by Lemma \ref{mod2}, we know that $f$ properly represents mod $2^t$ all integers not equivalent to $0 \bmod 4$, where $2^t$ is the highest power of 2 dividing $2^4a^2b^2$.  Lemma \ref{gluing} then implies that $f$ represents all square-free integers properly mod $2^4a^2b^2$, and by Watson's Theorem (Theorem \ref{watsonthm}) we have that $f$ represents all integers.  Therefore there exist $x,y \in LQ_{a,b}$ such that $\alpha - z^2 = x^2 + y^2$, so $g_{a,b} (2) \leq 3$. So, $g_{a,b} (2) = 3$, thus completing the proof. 

\end{proof}

\section{Density results} \label{density}

Let $C(x) = \{(a,b) \in \N \times \N \mid a,b \leq x; \, a,b \text{ square-free; and }\gcd(a,b) = 1\}$; we call such pairs $(a,b)$ {\em strongly carefree couples}.  Theorem 1 of Moree \cite{moree} states that
\[C(x) = \frac{x^2}{\zeta(2)^2}\prod_p \left(1 - \frac{1}{(p+1)^2}\right) + O(x^{3/2}),\] 
so the probability of a pair of integers $(a,b)$ to be a strongly carefree couple is
\[K = \frac{1}{\zeta(2)^2}\prod_p \left(1 - \frac{1}{(p+1)^2}\right) \approx 0.286747.\]

\begin{proof}[Proof of Corollary \ref{densitythm}]
First, we let $C_{ev}(x) = \{(a,b) \in C(x) \mid ab \text{ even}\}$ and $C_{od}(x) = \{(a,b) \in C(x) \mid ab \text{ odd}\}$.  We clearly have that $C(x)$ is the disjoint union of $C_{ev}(x)$ and $C_{od} (x)$.  Additionally, note that $(a,b) \in C_{ev}(x)$ if and only if exactly one of $(a,b/2)$ and $(a/2,b)$ is in $C_{od}(x)$; since these pairs lie in range with area each half that of $C_{ev}(x)$, we get that $C_{ev} \sim C_{od}$.

We then let $CC(x) = \{(a,b) \in \N \times \N \mid a,b \leq x; \, a,b \text{ square-free; }\gcd(a,b) \leq 2 \}$.  Note that $C(x) \subseteq CC(x)$, and that 
\begin{align*}
    CC(x) - C(x) & = \{(a,b) \in \N \times \N \mid a,b \leq x; \, a,b \text{ square-free; and}\gcd(a,b) = 2 \} \\
    & = \{(2a_0,2b_0) \in \N \times \N \mid (2a_0,2b_0) \in C(x), a_0,b_0 \text{ odd} \} \\
    & = \{(2a_0,2b_0) \in \N \times \N \mid (a_0,b_0) \in C_{od}(x/2)\}
\end{align*}
Then, since $C_{od} \sim C/2$ and $C(x/2) \sim C(x)/4$, we get that $(CC - C) \sim C/8$; therefore, the probability that a pair of positive integers $(a,b)$ satisfies the hypothesis of Theorem \ref{mainthm} -- that $a$ and $b$ are square-free and $\gcd(a,b) \leq 2$ -- is
\[\frac{9K}{8} \approx 0.322590.\]
\end{proof}

\section{Open questions}

Theorem 3.6 of Cooke et al. \cite{chw} states that if $a \equiv b \equiv 0 \bmod 4$, then $g_{a,b}(2) = 5$, so $D(5,2) \geq 0.0625.$  This and Corollary \ref{densitythm} are the only positive density results we know of at this time.  The methods here can be expanded to a few other cases, but other methods will likely have to be used to get the known cases over 50\%.  

In cases where $a$ and $b$ are odd, but $\gcd(a,b)>1$, computational evidence seems to suggest that $g_{a,b}(2) = 3$, but these cases have modular obstructions to the methods used in this paper.  We have tried a number of different methods to show that $g_{3,3}(2) = 3$, without success.

It should also be noted that throughout this paper, we assume that $a$ and $b$ are positive.  Given that Watson's Theorem applies to any indefinite quadratic form, and has only modular restrictions, we expect that results when $a$ and $b$ have opposite signs would look very similar to the results we have here.

\bibliographystyle{plainnat}

\end{document}